\documentclass[12pt]{amsart}
\usepackage{a4wide}
\usepackage{graphicx,eepic, bm, times}
\usepackage{amsthm,amsmath,amsfonts,amssymb}
\usepackage{multirow}



\newtheorem{lemma}{Lemma}[section]

\newtheorem{prop}[lemma]{Proposition}
\newtheorem{thm}[lemma]{Theorem}

\theoremstyle{definition}

\theoremstyle{remark}

\numberwithin{equation}{section} \numberwithin{table}{section}

\begin{document}

\title[On the distribution of powers of real numbers modulo 1]{On the distribution of powers of real numbers modulo 1}
\author{Simon Baker}
\address{
School of Mathematics, The University of Manchester,
Oxford Road, Manchester M13 9PL, United Kingdom. E-mail:
simonbaker412@gmail.com}

\date{\today}
\subjclass[2010]{11K06, 11K31}
\keywords{Powers of a real number, Uniform distribution}

\begin{abstract}
Given a strictly increasing sequence of positive real numbers tending to infinity $(q_{n})_{n=1}^{\infty}$, and an arbitrary sequence of real numbers $(r_{n})_{n=1}^{\infty}.$ We study the set of $\alpha\in(1,\infty)$ for which $\lim_{n\to\infty}\|\alpha^{q_{n}}-r_{n}\|= 0$. In \cite{Dub} Dubickas showed that whenever $\lim_{n\to\infty}(q_{n+1}-q_{n})=\infty,$ there always exists a transcendental $\alpha$ for which $\lim_{n\to\infty}\|\alpha^{q_{n}}-r_{n}\|= 0.$ Adapting the approach of Bugeaud and Moshchevitin \cite{BugMos}, we improve upon this result and show that whenever $\lim_{n\to\infty}(q_{n+1}-q_{n})=\infty,$ the set of $\alpha\in(1,\infty)$ satisfying $\lim_{n\to\infty}\|\alpha^{q_{n}}-r_{n}\|= 0$ is a dense set of Hausdorff dimension $1$.
\end{abstract}

\maketitle

\section{Introduction}
It is a well known result of Koksma that for almost every $\alpha\in(1,\infty)$ the sequence $(\{\alpha^{n}\})_{n=1}^{\infty}$ is uniformly distributed modulo $1$ \cite{Kok}. Here and throughout almost every is meant with respect to the Lebesgue measure, and $\{\cdot\}$ denotes the fractional part of a real number. It is a long standing problem to determine the set of $\alpha\in(1,\infty)$ for which $\lim_{n\to\infty}\|\alpha^{n}\|= 0,$ where $\|\cdot\|$ denotes the distance to the nearest integer. The only known examples of numbers satisfying this property are Pisot numbers, that is algebraic integers whose Galois conjugates all have modulus strictly less than $1$. It was shown independently by Hardy \cite{Hardy} and Pisot \cite{Pisot2}, that if $\alpha$ is an algebraic number and $\lim_{n\to\infty}\|\alpha^{n}\|= 0$, then $\alpha$ is a Pisot number. Moreover, Pisot in \cite{Pisot} showed that there are only countably many $\alpha\in(1,\infty)$ satisfying $\lim_{n\to\infty}\|\alpha^{n}\|= 0.$ This naturally leads to the question: $$\textrm{ Is there a transcendental } \alpha\in(1,\infty)\textrm{ satisfying }\lim_{n\to\infty}\|\alpha^{n}\|= 0\, \textrm{?}$$ This question is highly non trivial and currently out of our reach.

In this paper, instead of studying the distribution of the sequence $\{\alpha\}, \{\alpha^{2}\},\{\alpha^{3}\},\ldots,$ we consider the distribution of the sequence $\{\alpha\},\{\alpha^{4}\}, \{\alpha^{9}\},\ldots,$ or more generally $\{\alpha^{q_{1}}\},\{\alpha^{q_{2}}\}, \{\alpha^{q_{3}}\},\ldots$ where $(q_{n})_{n=1}^{\infty}$ is a strictly increasing sequence of positive real numbers tending to infinity. We remark that if $\liminf_{n\to\infty} (q_{n+1}-q_{n})>0,$ then for almost every $\alpha\in(1,\infty)$ the sequence $(\{\alpha^{q_{n}}\})_{n=1}^{\infty}$ is uniformly distributed modulo $1$. The proof of this statement is a simple adaptation of the proof of Theorem $1.10$ in \cite{Bug}. All of the sequences we will be considering will satisfy $\liminf_{n\to\infty}( q_{n+1}-q_{n})>0.$

We are interested in the set of $\alpha\in(1,\infty)$ for which $(\{\alpha^{q_{n}}\})_{n=1}^{\infty}$ is not uniformly distributed modulo $1.$ In particular, the set of $\alpha\in(1,\infty)$ for which $\lim_{n\to\infty}\|\alpha^{q_{n}}\|= 0$, or more generally the set of $\alpha\in(1,\infty)$ which satisfy $\lim_{n\to\infty}\|\alpha^{q_{n}}-r_{n}\|= 0,$ where $(r_{n})_{n=1}^{\infty}$ is an arbitrary sequence of real numbers. Let $$E(q_{n},r_{n}):=\Big\{\alpha\in(1,\infty): \lim_{n\to\infty}\|\alpha^{q_{n}}-r_{n}\|= 0\Big\}.$$ In \cite{Dub}, Dubickas showed that whenever $\lim_{n\to\infty}(q_{n+1}-q_{n})=\infty,$ then it is possible to construct a transcendental  $\alpha$ contained in $E(q_{n},r_{n}).$ Note that the $\alpha$ they construct is always larger than $2$. Our main result is the following.

\begin{thm}
\label{Main theorem}
Let $(q_{n})_{n=1}^{\infty}$ be a strictly increasing sequence of positive real numbers satisfying $\lim_{n\to\infty}(q_{n+1}-q_{n})= \infty,$ and let $(r_{n})_{n=1}^{\infty}$ be an arbitrary sequence of real numbers. Then $E(q_{n},r_{n})$ is dense in $(1,\infty)$ and has Hausdorff dimension $1$.
\end{thm}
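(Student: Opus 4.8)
The plan is to fix an arbitrary interval $[c,c+\tau]\subset(1,\infty)$ and build inside it a Cantor-type set $K=K_{c,\tau}\subseteq E(q_n,r_n)$ with $\dim_H K\ge\frac{\log c}{\log(c+\tau)}$. Since each $K_{c,\tau}$ is non-empty, $E(q_n,r_n)$ meets every subinterval of $(1,\infty)$ and is dense; and letting $\tau\to0$ with $c$ fixed gives $\dim_H E(q_n,r_n)\ge1$, while $\dim_H E(q_n,r_n)\le1$ is automatic. First fix auxiliary parameters $\varepsilon_n\downarrow0$: writing $a_n:=q_{n+1}-q_n\to\infty$, note $a_n\to\infty$ forces $q_n/n\to\infty$ (Cesàro), so one may choose $\varepsilon_n\downarrow0$ with $\log(1/\varepsilon_n)\le\tfrac12 a_n\log c$ for all $n$ and $\sum_{k\le n}\log(1/\varepsilon_k)=o(q_n)$. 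This is the one place the hypothesis $q_{n+1}-q_n\to\infty$ is really used: it lets the branching tend to infinity while still forcing $\varepsilon_n\to0$.

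\textbf{The construction.} For $n\ge1$, $m\in\mathbb Z$ put $J_{n,m}:=\{\alpha>1:\alpha^{q_n}\in[\,r_n+m-\varepsilon_n,\ r_n+m+\varepsilon_n\,]\}$; since $\alpha\mapsto\alpha^{q_n}$ is an increasing diffeomorphism of $(1,\infty)$, each $J_{n,m}$ is a closed interval and the $J_{n,m}$ are pairwise disjoint. Choose $n_0$ large, set $\mathcal I_{n_0}:=\{[c,c+\tau]\}$, and define $\mathcal I_n$ for $n>n_0$ recursively: for $I\in\mathcal I_{n-1}$, comparing the integrands $q_nt^{q_n-1}$ and $q_{n-1}t^{q_{n-1}-1}$ on $I$ and using $\inf I\ge c$ gives
\[
(\sup I)^{q_n}-(\inf I)^{q_n}\ \ge\ \tfrac{q_n}{q_{n-1}}\,c^{\,a_{n-1}}\bigl((\sup I)^{q_{n-1}}-(\inf I)^{q_{n-1}}\bigr)\ \ge\ 2\,c^{\,a_{n-1}}\varepsilon_{n-1},
\]
so $I$ contains at least $m_n:=\lfloor c^{\,a_{n-1}}\varepsilon_{n-1}\rfloor$ of the intervals $J_{n,m}$; by the choice of $\varepsilon_n$, $m_n\to\infty$, in particular $m_n\ge2$ for $n$ large (enlarge $n_0$). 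Let $\mathcal I_n$ consist, inside each $I\in\mathcal I_{n-1}$, of exactly $m_n$ of these $J_{n,m}$'s, chosen to be roughly equally spaced across $I$ (within a universal constant); this is possible because the distortion of $\alpha\mapsto\alpha^{q_n}$ on the minuscule interval $I$ tends to $1$, so the available $J_{n,m}\subseteq I$ are, up to a bounded factor, equally spaced and fill $I$. Put $K:=\bigcap_{n\ge n_0}\bigcup_{I\in\mathcal I_n}I$. This is a non-empty compact set, and every $\alpha\in K$ satisfies $\|\alpha^{q_n}-r_n\|\le\varepsilon_n\to0$, so $K\subseteq E(q_n,r_n)$.

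\textbf{The dimension bound.} Equip $K$ with the natural probability measure $\mu$ giving mass $(m_{n_0+1}\cdots m_n)^{-1}$ to each $I\in\mathcal I_n$. The metric data: any level-$n$ interval has length at least $\ell_n:=2\varepsilon_n/\bigl(q_n(c+\tau)^{q_n}\bigr)$ (the minimum of $2\varepsilon_n/(q_n\xi^{q_n-1})$ over $\xi\le c+\tau$), distinct level-$n$ intervals inside a common parent are $G_n$-separated with $G_n\gtrsim\ell_{n-1}/m_n$, the $G_n$ decrease to $0$, and hence $m_nG_n\gtrsim\ell_{n-1}$. A routine ball-counting argument then gives, for $x\in K$ and $G_{n+1}/2\le\rho<G_n/2$,
\[
\mu\bigl(B(x,\rho)\bigr)\ \le\ (m_{n_0+1}\cdots m_n)^{-1}\cdot\min\!\Bigl(1,\ \tfrac{4\rho}{\,m_{n+1}G_{n+1}\,}\Bigr),
\]
from which, for $s\in(0,1)$, one obtains $\mu(B(x,\rho))\le C\rho^{s}$ with uniform $C$ as soon as $\liminf_n (m_{n_0+1}\cdots m_n)(m_{n+1}G_{n+1})^{s}>0$, i.e.\ (since $m_{n+1}G_{n+1}\gtrsim\ell_n$) as soon as $\liminf_n\bigl(\log(m_{n_0+1}\cdots m_n)+s\log\ell_n\bigr)>-\infty$. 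Now $\log(m_{n_0+1}\cdots m_n)=\sum_{k=n_0+1}^{n}\bigl(a_{k-1}\log c-\log(1/\varepsilon_{k-1})\bigr)+O(n)=q_n\log c-o(q_n)$ and $\log\ell_n=-q_n\log(c+\tau)-o(q_n)$ (exactly here one uses $\sum_{k\le n}\log(1/\varepsilon_k)=o(q_n)$, $O(n)=o(q_n)$ and $\log q_n=o(q_n)$), so the bracket equals $q_n\bigl(\log c-s\log(c+\tau)\bigr)-o(q_n)\to+\infty$ for every $s<\frac{\log c}{\log(c+\tau)}$. By the mass distribution principle $\dim_H K\ge s$ for all such $s$, hence $\dim_H K\ge\frac{\log c}{\log(c+\tau)}$, and the theorem follows as above.

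\textbf{The main obstacle.} I expect the delicate step to be the dimension estimate, not the construction. One must calibrate the decay of $\varepsilon_n$ finely: slow enough that the branching $m_n\asymp c^{a_{n-1}}\varepsilon_{n-1}$ still tends to infinity (which is exactly what uses $a_n\to\infty$) and that $\sum_{k\le n}\log(1/\varepsilon_k)=o(q_n)$, yet with $\varepsilon_n\to0$. And one must spread the chosen subintervals evenly across each parent, so that the resolution scale in the denominator of the dimension quotient is the parent length $\ell_{n-1}$ — whose logarithm is $\asymp-q_n\log(c+\tau)$, comparable to $\log(m_{n_0+1}\cdots m_n)\asymp q_n\log c$ — rather than the far smaller spacing between all available subintervals near the right endpoint, which carries a spurious factor $(c+\tau)^{q_n}$ and would wreck the bound.
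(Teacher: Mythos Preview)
Your construction and dimension calculation are essentially the paper's, but there is a genuine gap at the one step you flag as routine: the claim that ``the distortion of $\alpha\mapsto\alpha^{q_n}$ on the minuscule interval $I$ tends to $1$'', which you use to spread the chosen $J_{n,m}$ roughly evenly across $I$ and thereby secure $m_nG_n\gtrsim\ell_{n-1}$. The distortion on a level-$(n-1)$ interval $I$ is $(\sup I/\inf I)^{q_n-1}\approx\exp\bigl(q_n|I|/c\bigr)$, and since $|I|\asymp\varepsilon_{n-1}/(q_{n-1}c^{q_{n-1}})$, this tends to $1$ only if $\varepsilon_{n-1}\,q_n/(q_{n-1}c^{q_{n-1}})\to0$. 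For sequences growing like $q_{n+1}=e^{q_n}$ and any $c<e$ this forces $\log(1/\varepsilon_n)\ge q_n(1-\log c)+o(q_n)$, which is \emph{not} $o(q_n)$; feeding that back into your formula for $\log\ell_n$ gives $-\log\ell_n\sim q_n\bigl(1-\log c+\log(c+\tau)\bigr)$ rather than $q_n\log(c+\tau)$, and the resulting lower bound is only $\dim_H K\ge\log c/(1-\log c+\log(c+\tau))\to\log c<1$ as $\tau\to0$. If instead you keep $\varepsilon_n$ decaying slowly (as in your stated choice), the distortion blows up, the $J_{n,m}$ inside $I$ cluster heavily toward the right endpoint, and it is simply impossible to select $m_n$ of them that are $\ell_{n-1}/m_n$-separated; the crucial estimate $m_nG_n\gtrsim\ell_{n-1}$ then fails.

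This is precisely the obstacle the paper's proof is built around. Before constructing the Cantor set, the paper \emph{refines} $(q_n)$ to a sequence $(\tilde q_n)$ by inserting, whenever $q_{N+1}>(1+\epsilon)q_N$, an arithmetic progression of step $\epsilon q_N/2$ between $q_N$ and $q_{N+1}$ (with arbitrary targets $\tilde r$ at the new indices). One checks easily that $\tilde q_{n+1}\le(1+\epsilon)\tilde q_n$ for all $n$, that $\tilde q_{n+1}-\tilde q_n\to\infty$ persists, and that $E(\tilde q_n,\tilde r_n)\subset E(q_n,r_n)$. With the growth capped by $\tilde q_{n+1}/\tilde q_n\le1+\epsilon$, your distortion claim becomes true (indeed the paper's version of the calculation does not even need equal spacing), the dimension bound comes out as $\log\lambda/(\epsilon\log\lambda+\log(\lambda+\delta))$, and letting $\epsilon\to0$ finishes. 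The paper itself notes that this insertion step is unnecessary when $q_{n+1}/q_n\to1$---which is exactly the regime in which your argument, as written, is valid.
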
 Our proof of Theorem \ref{Main theorem} is based upon the approach of Bugeaud and Moshchevitin \cite{BugMos}, which in turn is based upon the approach of Vijayaraghavan \cite{Vij}. They show that for any $\epsilon>0$ and $(r_{n})_{n=1}^{\infty}$ a sequence of real numbers, there exists a set of Hausdorff dimension $1$ for which $\|\alpha^{n}-r_{n}\|<\epsilon$ for all $n\geq 1$. The set of $\alpha\in(1,\infty)$ which satisfy $\|\alpha^{n}-r_{n}\|<\epsilon$ for all $n$ sufficiently large is studied further in \cite{Kah}.

Given $(\{\alpha^{q_{n}}\})_{n=1}^{\infty}$ is uniformly distributed modulo $1$ for almost every $\alpha\in(1,\infty)$, Theorem \ref{Main theorem} is somewhat surprising in that it states that there exists a set, which in some sense is as large as we could hope for, which exhibits completely the opposite behaviour of uniform distribution. Indeed, taking $(r_{n})_{n=1}^{\infty}$ to be the constant sequence $r_{n}=\kappa$ for some $\kappa\in(0,1)$, we have a dense set of Hausdorff dimension $1$ satisfying $\lim_{n\to\infty}\{\alpha^{q_{n}}\}= \kappa$.
\section{Proof of Theorem \ref{Main theorem}}
We prove Theorem \ref{Main theorem} via a Cantor set construction. To help our exposition we briefly recall some of the theory from \cite{Fal} on this type of construction. Let $E_{1}\subset \mathbb{R}$ be an arbitrary closed interval and $E_{1}\supset E_{2}\supset E_{3}\supset \cdots$ be a decreasing sequence of sets, where each $E_{n}$ is a finite union of disjoint closed intervals, where each element of $E_{n}$ contains at least two elements of $E_{n+1},$ and the maximum length of the intervals in $E_{n}$ tends to $0$ as $n\to\infty$. Then the set
\begin{equation}
\label{Cantor set}
E=\bigcap_{n=1}^{\infty}E_{n}
\end{equation} is the Cantor set associated to the sequence $(E_{n})_{n=1}^{\infty}.$ The following proposition appears at Example $4.6$ in \cite{Fal}.
\begin{prop}
\label{Falconer prop}
Suppose in the construction of $E$ above each interval in $E_{n-1}$ contains at least $m_{n}$ intervals of $E_{n}$ which are separated by gaps of at least $\gamma_{n},$ where $0<\gamma_{n+1}<\gamma_{n}$ for each $n$. Then $$\dim_{H}(E)\geq \liminf_{n\to\infty} \frac{\log m_{1}\cdots m_{n-1}}{-\log m_{n}\gamma_{n}}.$$
\end{prop}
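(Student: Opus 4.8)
The plan is to prove the lower bound via the mass distribution principle from \cite{Fal}: if a Borel probability measure $\mu$ supported on $E$ satisfies $\mu(U)\le c|U|^{t}$ for all intervals $U$ of sufficiently small length, then $\mathcal{H}^{t}(E)\ge \mu(E)/c>0$ and hence $\dim_{H}(E)\ge t$. It therefore suffices to fix an arbitrary $t<s:=\liminf_{n\to\infty}\frac{\log m_{1}\cdots m_{n-1}}{-\log m_{n}\gamma_{n}}$ and to construct such a $\mu$ for this exponent; letting $t\uparrow s$ then gives the claim. Since $E\subset\mathbb{R}$ has $\dim_{H}(E)\le 1$, I may assume $0\le t\le 1$. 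Before constructing $\mu$ I would pass to a subset of $E$ by retaining exactly $m_{n}$ of the children of each interval of $E_{n-1}$; deleting intervals only widens the gaps, so the separation hypothesis is preserved, and this can only decrease the Hausdorff dimension, so a lower bound for the subset is a lower bound for $E$. From now on each interval of $E_{n-1}$ has exactly $m_{n}$ offspring in $E_{n}$.

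Next I would define the natural measure $\mu$ by distributing mass uniformly at each stage, so that every one of the $m_{1}\cdots m_{n}$ intervals of $E_{n}$ carries mass exactly $(m_{1}\cdots m_{n})^{-1}$; this is consistent across levels and defines a probability measure supported on $E$. The crux is a two-scale covering estimate. Given an interval $U$ with $|U|$ small, I would first record that the gaps tend to $0$ (each $\gamma_{n}$ is at most the length of a parent interval of $E_{n-1}$, which tends to $0$), so that $\gamma_{n}$ decreases strictly to $0$ and there is a unique $n$ with $\gamma_{n}\le |U|<\gamma_{n-1}$. Because the intervals of $E_{n-1}$ are separated by gaps at least $\gamma_{n-1}>|U|$, the interval $U$ can meet at most one interval of $E_{n-1}$. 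Inside that interval the $m_{n}$ children are separated by gaps at least $\gamma_{n}\le |U|$, so $U$ meets at most $\min(m_{n},\,|U|/\gamma_{n}+1)\le\min(m_{n},\,2|U|/\gamma_{n})$ of them.

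Combining the two counts gives $\mu(U)\le\min(m_{n},2|U|/\gamma_{n})\,(m_{1}\cdots m_{n})^{-1}$. I would then apply the elementary interpolation inequality $\min(a,b)\le a^{1-t}b^{t}$ (valid for $a,b>0$ and $t\in[0,1]$) with $a=m_{n}$ and $b=2|U|/\gamma_{n}$ to obtain
\begin{equation*}
\mu(U)\le 2^{t}\,|U|^{t}\,\frac{1}{(m_{n}\gamma_{n})^{t}\,m_{1}\cdots m_{n-1}}.
\end{equation*}
Finally I would invoke the choice of $t$. Since $(m_{n}-1)\gamma_{n}$ is bounded by the length of a parent interval, $m_{n}\gamma_{n}\to 0$, so all but finitely many denominators $-\log(m_{n}\gamma_{n})$ are positive; and $t<s$ forces $\frac{\log m_{1}\cdots m_{n-1}}{-\log m_{n}\gamma_{n}}>t$ for all large $n$, which rearranges precisely to $(m_{n}\gamma_{n})^{t}\,m_{1}\cdots m_{n-1}\ge 1$. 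Hence $\mu(U)\le 2^{t}|U|^{t}$ for every sufficiently short $U$, and the mass distribution principle yields $\dim_{H}(E)\ge t$; letting $t\uparrow s$ completes the proof.

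The main obstacle is the two-scale counting step: one must use the coarser separation $\gamma_{n-1}$ to confine $U$ to a single parent interval, and the finer separation $\gamma_{n}$ together with the cardinality bound $m_{n}$ to count the offspring meeting $U$, and then package both estimates through the interpolation inequality so that the power of $|U|$ matches exactly the combinatorial factor $(m_{n}\gamma_{n})^{t}m_{1}\cdots m_{n-1}$ appearing in the definition of $s$. Checking that $\gamma_{n}\to 0$ and $m_{n}\gamma_{n}\to 0$, so that the index $n$ is well defined and the relevant logarithm has the correct sign, is routine but is what legitimises the final rearrangement.
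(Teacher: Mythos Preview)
Your proof is correct and is precisely the standard argument from Example~4.6 in Falconer's book \cite{Fal}. Note that the paper itself does not supply a proof of this proposition: it simply quotes the result from \cite{Fal}, so there is no ``paper's own proof'' to compare against beyond the reference you have in effect reproduced.
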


We are now is a position to prove Theorem \ref{Main theorem}.

\begin{proof}[Proof of Theorem \ref{Main theorem}]
We begin by fixing $\lambda\in(1,\infty),$ $\delta>0$ some small positive constant, and let $(r_{n})_{n=1}^{\infty}$ be our sequence of real numbers. Without loss of generality we may assume that $r_{n}\in[-1/2,1/2)$ for all $n\in\mathbb{N}$. To prove our result it is sufficient to prove that $[\lambda,\lambda+\delta]\cap E(q_{n},r_{n})$ is of Hausdorff dimension $1$.
\\

\noindent $\textbf{(1)}$ \textbf{Replacing $(q_{n})_{n=1}^{\infty}$ with $(\tilde{q}_{n})_{n=1}^{\infty}.$ }
\\ \\
Let $\epsilon>0$ be some small positive constant. We now replace our sequence $(q_{n})_{n=1}^{\infty}$ with $(\tilde{q}_{n})_{n=1}^{\infty},$ and our sequence $(r_{n})_{n=1}^{\infty}$ with $(\tilde{r}_{n})_{n=1}^{\infty}.$ We will pick our $(\tilde{q}_{n})_{n=1}^{\infty}$ and $(\tilde{r}_{n})_{n=1}^{\infty}$ in such a way that $E(\tilde{q}_{n},\tilde{r}_{n})\subset E(q_{n},r_{n}).$ We then use Proposition \ref{Falconer prop} to determine a lower bound for $\dim_{H}(E(\tilde{q}_{n},\tilde{r}_{n})\cap [\lambda,\lambda+\delta]),$ which in turn provides a lower bound for $\dim_{H}(E(q_{n},r_{n})\cap [\lambda,\lambda+\delta])$. The feature of the sequence $(\tilde{q}_{n})_{n=1}^{\infty}$ that we will exploit in our proof, is that this new sequence does not grow too fast, yet importantly we still have $\lim_{n\to\infty}(\tilde{q}_{n+1}-\tilde{q}_{n})=\infty$. The sequence $(\tilde{q}_{n})_{n=1}^{\infty}$ and the rate at which we control the growth of $(\tilde{q}_{n})_{n=1}^{\infty}$ shall depend on $\epsilon.$ For ease of exposition we drop the dependence of $(\tilde{q}_{n})_{n=1}^{\infty}$ on $\epsilon$ from our notation.

We begin our construction by asking whether
\begin{equation}
\label{epsilon growth}
q_{n+1}\leq (1+\epsilon)q_{n}
\end{equation}is satisfied for all $n\in\mathbb{N}.$ If it is, we set $(q_{n})_{n=1}^{\infty}=(\tilde{q}_{n})_{n=1}^{\infty},$ $(r_{n})_{n=1}^{\infty}=(\tilde{r}_{n})_{n=1}^{\infty}$ and stop. Suppose our sequence $(q_{n})_{n=1}^{\infty}$ doesn't satisfy (\ref{epsilon growth}) for all $n\in\mathbb{N}.$ Let $N\in\mathbb{N}$ be the first $n\in\mathbb{N}$ for which (\ref{epsilon growth}) fails. We now introduce additional terms in our sequence $(q_{n})_{n=1}^{\infty},$ situated between $q_{N}$ and $q_{N+1}$ at $$\tilde{q}^{j}_{N}:=q_{N}+j\Big(\frac{\epsilon q_{N}}{2}\Big).$$ For $j=1,\ldots,m,$ where $m$ is the smallest natural number for which $q_{N}+m(\frac{\epsilon q_{N}}{2})\in[q_{N+1}-\epsilon q_{N},q_{N+1}].$ To each $\tilde{q}^{j}_{N}$ we associate an arbitrary real number $\tilde{r}^{j}_{N}\in[-1/2,1/2)$, these terms are then placed within the sequence $(r_{n})_{n=1}^{\infty}$ between $r_{N}$ and $r_{N+1}.$ Importantly the elements $q_{N}$ and $q_{N+1}$ are still placed in the positions corresponding to $r_{N}$ and $r_{N+1}.$

The following inequalities are straightforward consequences of our construction
\begin{align}
\label{growth bound}
\tilde{q}^{1}_{N}&\leq (1+\epsilon)q_{n}\nonumber\\
\tilde{q}^{j+1}_{N}&\leq (1+\epsilon)\tilde{q}^{j}_{N} \textrm{ for } j=1,\ldots,m-1\\
q_{N+1}&\leq  (1+\epsilon)\tilde{q}^{m}_{N}.\nonumber
\end{align}In other words, all of the new terms in our sequences satisfy (\ref{epsilon growth}). The new terms in our sequence also satisfy
\begin{align}
\label{sufficient growth}
\tilde{q}^{1}_{N}-q_{N}&= \frac{\epsilon q_{N}}{2}\nonumber\\
\tilde{q}^{j+1}_{N}-\tilde{q}^{j}_{N}&=\frac{\epsilon q_{N}}{2}\textrm{   for } j=1,\ldots,m-1\\
q_{N+1}-\tilde{q}^{m}_{N}&\geq \frac{\epsilon q_{N}}{2}. \nonumber
\end{align}So if $N$ was large the gaps between successive terms in our sequences would be large. This property is what allows us to ensure $\lim_{n\to\infty}(\tilde{q}_{n+1}-\tilde{q}_{n})=\infty$.

We now take our new sequence and ask if it satisfies (\ref{epsilon growth}) for all $n\in\mathbb{N}.$ If it does then our construction is complete, and we set $(\tilde{q}_{n})_{n=1}^{\infty}$ and $(\tilde{r}_{n})_{n=1}^{\infty}$ to be our new sequences. If not, we find the smallest $n$ for which it fails and repeat the above steps. Repeating this process indefinitely if necessary, we construct sequences $(\tilde{q}_{n})_{n=1}^{\infty}$ and $(\tilde{r}_{n})_{n=1}^{\infty}$ for which
 \begin{equation}
\label{epsilon growth2}
\tilde{q}_{n+1}\leq (1+\epsilon)\tilde{q}_{n},
\end{equation} holds for all $n\in\mathbb{N}$, we retain the property
\begin{equation}
\label{sufficient growth2}
\lim_{n\to\infty}(\tilde{q}_{n+1}-\tilde{q}_{n})=\infty,
\end{equation}and
\begin{equation}
\label{inclusion}
E(\tilde{q}_{n},\tilde{r}_{n})\subset E(q_{n},r_{n}).
\end{equation}
The fact that (\ref{sufficient growth2}) holds is a consequence of (\ref{sufficient growth}). The final property (\ref{inclusion}) holds because the original terms in our sequence $(q_{n})_{n=1}^{\infty}$ keep their corresponding $r_{n}$, and at each step in our construction we only ever introduced finitely many terms between a $q_{n}$ and a $q_{n+1}$. So if $\alpha$ satisfies $\|\alpha^{\tilde{q}_{n}}-\tilde{r}_{n}\|\to 0$ then it also satisfies $\|\alpha^{q_{n}}-r_{n}\|\to 0$, i.e., (\ref{inclusion}) holds.
\\

\noindent $\textbf{(2)}$ \textbf{Construction of our Cantor set.}
\\ \\
We now construct our Cantor set $E$. Our set $E$ will be contained in $[\lambda,\lambda+\delta]\cap E(\tilde{q}_{n},\tilde{r}_{n})$ and we will be able to use Proposition \ref{Falconer prop} to obtain estimates on $\dim_{H}(E)$. We let $$\epsilon_{n}:=\frac{1}{2(\tilde{q}_{n+1}-\tilde{q}_{n})},$$ by (\ref{sufficient growth2}) we have $\epsilon_{n}\to 0$. Let us fix $\eta\in(0,1)$ some parameter that we will eventually let tend to $1$. Let $N$ be sufficiently large that
\begin{equation}
\label{2.8}
2\epsilon_{n}\lambda^{\tilde{q}_{n+1}-\tilde{q}_{n}}=\frac{\lambda^{\tilde{q}_{n+1}-\tilde{q}_{n}}}{\tilde{q}_{n+1}-\tilde{q}_{n}}\geq \lceil \lambda^{\eta(\tilde{q}_{n+1}-\tilde{q}_{n})}\rceil +2
\end{equation}for all $n\geq N$. We may also assume that this $N$ is sufficiently large that $(\lambda+\delta)^{\tilde{q}_{N}}-\lambda^{\tilde{q}_{N}}\geq 4$ and $\epsilon_{n}<1/2$ for all $n\geq N.$

We let
$$a_{n}:=\tilde{r}_{n}-\epsilon_{n} \textrm{ and } b_{n}:=\tilde{r}_{n}+\epsilon_{n}$$ for all $n\in\mathbb{N}.$ By our assumptions on $\tilde{r}_{n}$ and $\epsilon_{n},$ we may assume that $a_{n},b_{n}\in(-1,1)$ for all $n\geq N.$

Since $(\lambda+\delta)^{\tilde{q}_{N}}-\lambda^{\tilde{q}_{N}}\geq 4,$ there exists an integer $j_{N}$ for which $j_{N},j_{N}+1,\ldots, j_{N}+m+1$ are contained in $[\lambda^{\tilde{q}_{N}},(\lambda+\delta)^{\tilde{q}_{N}}],$ where $m$ is some natural number greater than or equal to $2$. We ignore the first and the last of these integers and focus on $j_{N}+1,\ldots j_{N}+m$. To each of these integers $h=j_{N}+1,\ldots j_{N}+m$ we associate the interval
$$I_{N,h}:=[(h+a_{N})^{1/\tilde{q}_{N}},(h+b_{N})^{1/\tilde{q}_{N}}].$$ By our construction each $I_{N,h}$ is contained in $[\lambda,\lambda+\delta],$ and each $\alpha\in I_{N,h}$ satisfies $\|\alpha^{\tilde{q}_{N}}-\tilde{r}_{N}\|<\epsilon_{N}.$ Let $E_{N}$ be the set of all intervals $I_{N,h}.$ For each $h$ we have
\begin{align}
\label{algorithm}
(h+b_{N})^{\tilde{q}_{N+1}/\tilde{q}_{N}}-(h+a_{N})^{\tilde{q}_{N+1}/\tilde{q}_{N}}&\geq \Big((h+b_{N})-(h+a_{N})\Big)(h+a_{N})^{\frac{\tilde{q}_{N+1}-\tilde{q}_{N}}{\tilde{q}_{N}}}\nonumber\\
&\geq 2\epsilon_{N}\lambda^{\tilde{q}_{N+1}-\tilde{q}_{N}}\\
&\geq \lceil \lambda^{\eta(\tilde{q}_{N+1}-\tilde{q}_{N})}\rceil+2.\nonumber
\end{align}Where the last inequality is by (\ref{2.8}). Therefore there exists an integer $j_{N+1}$ such that $j_{N+1},j_{N+1}+1,\ldots, j_{N+1}+\lceil\lambda^{\eta(\tilde{q}_{n+1}-\tilde{q}_{n})}\rceil+1$ are all contained in $[(h+a_{N})^{\tilde{q}_{n+1}/\tilde{q}_{n}},(h+b_{N})^{\tilde{q}_{n+1}/\tilde{q}_{n}}].$ To each $h=j_{N+1}+1,\ldots, j_{N+1}+\lceil\lambda^{\eta(\tilde{q}_{n+1}-\tilde{q}_{n})}\rceil$ we associate the interval $$I_{N+1,h}=[(h+a_{N+1})^{1/\tilde{q}_{N+1}},(h+b_{N+1})^{1/\tilde{q}_{N+1}}].$$ Importantly each interval $I_{N+1,h}$ is contained in an element of $E_{N},$ and this element contains precisely $m_{N}:=\lceil\lambda^{\eta(\tilde{q}_{N+1}-\tilde{q}_{N})}\rceil$ of these intervals. We let $E_{N+1}$ denote the set of $I_{N+1,h}.$ Any $\alpha\in I_{N+1,H}$ is contained in $[\lambda,\lambda+\delta]$ and satisfies $\|\alpha^{\tilde{q}_{N}}-\tilde{r}_{N}\|<\epsilon_{N}$ and $\|\alpha^{\tilde{q}_{N+1}}-\tilde{r}_{N+1}\|<\epsilon_{N+1}.$ We may show that (\ref{algorithm}) holds with $a_{N},b_{N}, \tilde{q}_{N},\tilde{q}_{N+1}$ replaced by $a_{N+1},b_{N+1}, \tilde{q}_{N+1},\tilde{q}_{N+2},$ and we may therefore repeat the above steps accordingly. Moreover, we may repeat the procedure described above for every subsequent $n.$ To each $n\geq N$ we let $E_{n}$ denote the set of interval $I_{n,h}$ produced in our construction. The following properties follow from our construction:
\begin{itemize}
\item Let $I_{n,h}\in E_{n},$ then for each $\alpha\in I_{n,h}$ we have $\|\alpha^{\tilde{q}_{i}}-\tilde{r}_{i}\|<\epsilon_{n}$ for $i=N,N+1,\ldots,n.$
\item $E_{n}\subset E_{n-1}\subset \cdots \subset E_{N}.$
\item $E_{n}\subset [\lambda,\lambda+\delta].$
\end{itemize}
If we let $$E=\bigcap_{n=N}^{\infty}E_{n},$$ it is clear that any $x\in E$ is contained in $ [\lambda,\lambda+\delta],$ and satisfies $\|x^{\tilde{q}_{n}}-\tilde{r}_{n}\|<\epsilon_{n}$ for all $n\geq N$, so $E\subset [\lambda,\lambda+\delta]\cap E(\tilde{q}_{n},\tilde{r}_{n}).$

It is a consequence of our construction that each element of $E_{n}$ contains exactly
\begin{equation}
\label{number of intervals}
m_{n}:=\lceil\lambda^{\eta(\tilde{q}_{n+1}-\tilde{q}_{n})}\rceil
\end{equation} elements of $E_{n+1}.$
It may also be shown that the distance between any two intervals in $E_{n}$ is always at least
\begin{equation}
\label{Gap bound}
\gamma_{n}:=\frac{c}{\tilde{q}_{n}(\lambda+\delta)^{\tilde{q}_{n}-1}},
\end{equation} where $c$ is some positive constant that is independent of $n$.

Applying Proposition \ref{Falconer prop}, combined with (\ref{number of intervals}) and (\ref{Gap bound}), we obtain the following bounds on the Hausdorff dimension of $E$:

\begin{align*}
\dim_{H}(E)&\geq \liminf_{n\to \infty} \frac{\log m\cdot m_{N}\cdots m_{n-1}}{-\log m_{n}\gamma_{n}}\\
& \geq \liminf_{n\to \infty} \frac{\log 2\cdot \lceil\lambda^{\eta(\tilde{q}_{N+1}-\tilde{q}_{N})}\rceil \cdots \lceil\lambda^{\eta(\tilde{q}_{n}-\tilde{q}_{n-1})}\rceil}{-\log \frac{c\lceil\lambda^{\eta(\tilde{q}_{n+1}-\tilde{q}_{n})}\rceil}{\tilde{q}_{n}(\lambda+\delta)^{\tilde{q}_{n}-1}}}\\
&\geq \liminf_{n\to\infty} \frac{\eta (\tilde{q}_{n}-\tilde{q}_{N})\log \lambda +\log 2}{-\log \frac{c\lceil\lambda^{\eta(\tilde{q}_{n+1}-\tilde{q}_{n})}\rceil}{\tilde{q}_{n}(\lambda+\delta)^{\tilde{q}_{n}-1}}}\\
&\geq \liminf_{n\to\infty} \frac{\eta (\tilde{q}_{n}-\tilde{q}_{N})\log \lambda +\log 2}{-\log \lceil\lambda^{\eta(\tilde{q}_{n+1}-\tilde{q}_{n})}\rceil -\log c +(\tilde{q}_{n}-1)\log (\lambda+\delta) +\log \tilde{q}_{n}}\\
&\geq \liminf_{n\to\infty} \frac{\eta (\tilde{q}_{n}-\tilde{q}_{N})\log \lambda +\log 2}{-\eta(\tilde{q}_{n+1}-\tilde{q}_{n})\log \lambda -\log c +(\tilde{q}_{n}-1)\log (\lambda+\delta) +\log \tilde{q}_{n}}\\
&\geq \frac{\eta\log \lambda}{\eta\epsilon \log \lambda + \log (\lambda+\delta)}.
\end{align*}
Since $\eta$ was arbitrary we may let it converge to $1$ so $$\dim_{H}([\lambda,\lambda+\delta]\cap E(\tilde{q}_{n},\tilde{r}_{n}))\geq  \frac{\log \lambda}{\epsilon \log \lambda +\log (\lambda +\delta)}.$$ Therefore, by (\ref{inclusion}) $$\dim_{H}([\lambda,\lambda+\delta]\cap E(q_{n},r_{n}))\geq  \frac{\log \lambda}{\epsilon \log \lambda +\log (\lambda +\delta)},$$ but since $\epsilon$ is arbitrary we may conclude that $$\dim_{H}([\lambda,\lambda+\delta]\cap E(q_{n},r_{n}))\geq \frac{\log \lambda}{\log( \lambda +\delta)}.$$ The argument we have presented also works for any $\delta'\in(0,\delta)$ and so $\dim_{H}([\lambda,\lambda+\delta']\cap E(q_{n},r_{n}))\geq \log \lambda/\log (\lambda+\delta').$  Moreover $[\lambda,\lambda+\delta']\cap E(q_{n},r_{n})\subset [\lambda,\lambda+\delta]\cap E(q_{n},r_{n}),$ so
 $$\dim_{H}([\lambda,\lambda+\delta]\cap E(q_{n},r_{n}))\geq \dim_{H}([\lambda,\lambda+\delta']\cap E(q_{n},r_{n}))\geq \frac{\log \lambda}{\log (\lambda+\delta')}.$$ Letting $\delta'$ tend to zero we deduce that $\dim_{H}([\lambda,\lambda+\delta]\cap E(q_{n},r_{n}))=1.$

\end{proof}
We conclude with a few remarks on our proof and the speed at which $\|\alpha^{q_{n}}-r_{n}\|$ converges to zero. In our proof of Theorem \ref{Main theorem} we set $\epsilon_{n}=1/2(\tilde{q}_{n+1}-\tilde{q}_{n}).$ This choice of $\epsilon_{n}$ is somewhat arbitrary, our proof still works with any sequence $\epsilon_{n}$ which tends to zero, as long as for any $\eta\in(0,1)$ we have $$2\epsilon_{n}\lambda^{\tilde{q}_{n+1}-\tilde{q}_{n}}\geq \lceil \lambda^{\eta(\tilde{q}_{n+1}-\tilde{q}_{n})}\rceil +2$$ for all $n$ sufficiently large.

 If $(q_{n})_{n=1}^{\infty}$ satisfies $\lim_{n\to\infty}q_{n+1}/q_{n}= 1$ then it is not necessary to introduce the sequences $(\tilde{q}_{n})_{n=1}^{\infty}$ and $(\tilde{r}_{n})_{n=1}^{\infty}$ in the proof of Theorem \ref{Main theorem}. This means we can say something about the speed of convergence. If $\epsilon_{n}$ decays to zero sufficiently slowly that for any $\eta\in(0,1)$ and $\lambda\in(1,\infty)$ we have $$2\epsilon_{n}\lambda^{q_{n+1}-q_{n}}\geq \lceil \lambda^{\eta(q_{n+1}-q_{n})}\rceil +2,$$ for all $n$ sufficiently large. Then the argument given in the proof of Theorem \ref{Main theorem} yields a dense set of Hausdorff dimension $1$ satisfying $$\|\alpha^{q_{n}}-r_{n}\|=O(\epsilon_{n}).$$ As an example, for any $k\in\mathbb{N}$ there exists a dense of Hausdorff dimension $1$ satisfying $$\|\alpha^{n^{2}}\|=O(n^{-k}).$$

\noindent \textbf{Acknowledgements} The author is grateful to Yann Bugeaud for pointing out \cite{Dub} and \cite{Kah}, and for some initial feedback.

\end{document}